\titleformat{\paragraph}[runin]{\normalfont\normalsize\itshape}{\theparagraph}{}{}[.] %format paragraph italic and ser a period after it
\titlespacing{\paragraph}{0pt}{0pt}{*1} %remove spacing and add one characterspace after paragraph
\mathchardef\mhyphen="2D
\DeclareMathOperator{\Hom}{Hom}
\DeclareMathOperator{\pt}{pt}
\newcommand{\Func}{\mathcal{F}}
\DeclareMathOperator{\Ob}{Ob}
\DeclareMathOperator{\Exact}{Exact}
\DeclareMathOperator{\Rep}{Rep}
\newcommand{\Vect}{\mathbf{Vect}}
\DeclareMathOperator{\Id}{Id}
\DeclareMathOperator{\Spaces}{Stacks}
\newcommand{\Stacks}{\Spaces}
\DeclareMathOperator{\sset}{sSet}
\DeclareMathOperator{\Corr}{Corr}
\DeclareMathOperator{\Sh}{Sh}
\newcommand{\ShConst}{\Sh^{const}}
\newcommand{\GL}{GL}
\newcommand{\CCC}{\mathcal{C}}
\newcommand{\AAA}{\mathcal{A}}
\newcommand{\Icorr}{\widetilde{I}}
\DeclareMathOperator{\dgCat}{dgCat}
\DeclareMathOperator{\LinCat}{LinCat}
\newcommand{\aug}[1]{A(#1)}
\newcommand{\grid}{\operatorname{grid}}
\newcommand{\OrdSet}{\mathbb{\Delta}}
\newcommand{\OrdSetMod}{\mathbb{\Delta_{mod}}}
\newcommand{\AugOrdSet}{\mathbb{\Delta}_+}
\newcommand{\FF}{\mathbbm{F}}
\newcommand{\NN}{\mathbbm{N}}
\DeclareMathOperator{\point}{\pt}
\newcommand{\CC}{\mathbbm{C}}
\newcommand{\DDD}{\mathcal{D}}
\newcommand{\ord}[1]{\left[ #1 \right] }
\newcommand{\ordop}[1]{\Delta_{#1} }
\newcommand{\field}{\mathbbm{k}}
\newcommand{\tik}{\begin{tikzcd}}
\newcommand{\tak}{\end{tikzcd}}
\def\latearrow#1#2#3#4{%
  \toks@\expandafter{\tikzcd@savedpaths\path[/tikz/commutative diagrams/every arrow,#1]}%
  \global\edef\tikzcd@savedpaths{%
    \the\toks@%
    (\tikzmatrixname-#2)% \noexpand\tikzcd@sourceanchor)%
    to%
    node[/tikz/commutative diagrams/every label] {$#4$}
    (\tikzmatrixname-#3)% \noexpand\tikzcd@targetanchor)
;}}
\def\stik#1#2{
\begin{tikzpicture}[baseline= (a).base]%
\node[scale=#1] (a) at (0,0){%
\begin{tikzcd}[ampersand replacement=\&]%
#2
\end{tikzcd}%
};%
\end{tikzpicture}
}
\newtheorem{Theorem}{Theorem}
\newtheorem{Proposition}{Proposition}[section]
\newaliascnt{Corollary}{Proposition}
\newtheorem{Corollary}[Corollary]{Corollary}
\newaliascnt{Lemma}{Proposition}
\newtheorem{Lemma}[Lemma]{Lemma}
\theoremstyle{definition}
\newaliascnt{Definition}{Proposition}
\newtheorem{Definition}[Definition]{Definition}
\theoremstyle{remark}
\newaliascnt{Notation}{Proposition}
\newtheorem{Notation}[Notation]{Notation}
\newaliascnt{Remark}{Proposition}
\newtheorem{Remark}[Remark]{Remark}
\newaliascnt{Example}{Proposition}
\newtheorem{Example}[Example]{Example}
\def\equationautorefname~#1\null{%
(#1)\null
}
\begin{document}

%+Title
\title{A geometric approach to Hall algebras I: Higher Associativity}
\author{Adam Gal, Elena Gal}
\maketitle

\begin{abstract}
We construct a geometric system from which the Hall algebra can be recovered. This system inherently satisfies higher associativity conditions and thus leads to a categorification of the Hall algebra. We then suggest how to use this approach to construct categorified representations.    
\end{abstract}

\tableofcontents

\section{Introduction}
To an abelian category $\CCC$ (with appropriate restrictions) one can assign an algebra $A$ called the \emph{Hall algebra} of $\CCC$. The first major application was in \cite{ringel1990hall} where Ringel showed that the positive half of the quantum group is isomorphic to the Hall algebra of the category $\Rep_{\FF_q}(Q)$ for $Q$ the quiver corresponding to the Dynkin diagram.

\begin{Example}
Take $Q$ to be the quiver with one point, then $\Rep_{\FF_q}(Q)$ is just $\Vect_{\FF_q}$. The Hall algebra $A$ has basis $\mathbf{n},n\in\NN$ and multiplication given by
\[
\mathbf{n}\cdot\mathbf{m}=\#\left(\{0\to\FF_q^n\to\FF_q^{n+m}\to\FF_q^m\to 0\}/\sim\right)\cdot(\mathbf{n+m})
\]
Even in this simplest example we see that geometry is evident in the Hall algebra, as the coefficient appearing is obviously the number of points of an algebraic variety (a Grassmannian in this case).
\end{Example}

Luzstig used this observation in \cite{lusztig1991quivers} to construct a canonical basis for the Hall algebra (and hence for the positive halves of quantum groups) and to prove the positivity for the canonical basis.

In this article we describe a coherent construction of a system of geometric objects which govern the Hall algebra, and which have inherent in them a higher  associative structure. 

In \cite{KapranovDyckerhoff} the authors consider the connection between Hall algebras and the Waldhausen construction. They introduce the notion of 2-Segal space and show that the Waldhausen construction for an abelian category $\CCC$ is a 2-Segal space. This fact in particular imples the associativity of the Hall algebra. Moreover they indicate the connection between the 2-Segal conditions and the higher associativity constraints. We explore this subject systematically in the present work. In \autoref{Corr0} we state the connection between the 2-Segal conditions and the higher associativity data we construct.

In \cite{KapranovDyckerhoff} the authors suggest to study Hall algebras on the level of category of correspondences. Consider the groupoids (or stacks) of objects of a category $\Ob$ and exact sequences $\Exact$. Then the correspondence

\[
\stik{1}{
{} \& \Exact \ar{dl}[above, xshift=-0.5em]{end} \ar{dr}{mid} \& {} \\
\Ob\times \Ob \& {} \& \Ob
}
\]
defines a monoidal structure on $\Ob$. Using a composition of appropriate push-and pull- functors this correspondence can be used to define multiplication in the Hall algebra of (finitely supported) functions on groupoids and it's geometric analog constructed using sheaves.

We construct a coherent system of higher associativity data for this structure. This data is provided by an explicit system of $n$-cubes of correspondences for any $n$. Altogether this construction organizes into a functor $\AugOrdSet\rightarrow Corr(\Spaces)$. In our forthcoming article \cite{ourGeometricHall2} we extend this to a functor from $\AugOrdSet\otimes\AugOrdSet$ which captures a bi-monoidal structure associated to Hall algebras. Such a construction was attempted also in \cite{Lyubashenko} from a different point of view.

Several interesting examples of Hall algebras can then be obtained from this abstract setting using so called \emph{transfer theories}. The examples treated in \cite{KapranovDyckerhoff} include Ringel's Hall algebra associated to the category of representations of a quiver, Lusztig’s geometric Hall algebra, To\"en's derived Hall algebra, Joyce’s motivic Hall algebra, and Kontsevich-Soibelman’s cohomological Hall algebra. 

In this article we outline a transfer theory to the 
%$(\infty,1)$-
category of constructible sheaves. The difference from the examples listed above is that we obtain a monoidal 
%$\infty$-
category as opposed to an algebra and thus have to make use of the higher associators mentioned above.

We show that the 2-Segal conditions of \cite{KapranovDyckerhoff} are equivalent to our associators going to isomorphisms under this transfer construction.

Finally, we suggest an approach to constructing categorified representations in \autoref{Representations}.

In what follows we will assume for the ease of presentation that we work with finitary abelian categories. However the construction we describe can be straightforwardly generalized to different settings, such as in the list of examples above.

\section{Notations}
$\OrdSet$ - the category of finite ordered sets. The elements of $\OrdSet$ will be denoted by \[\ordop{0}=0, \ordop{1}=0\rightarrow 1, \ordop{2}=0\rightarrow 1\rightarrow 2, \ldots\]
$\AugOrdSet$ - the augmented category of finite ordered sets. The elements of $\AugOrdSet$ will be denoted by \[\ord{0}=\emptyset, \ord{1}=0, \ord{2}=0\rightarrow 1, \ord{3}=0\rightarrow 1\rightarrow 2, \ldots\]

%We will also use notations 
%$\ord{n}$ - finite ordered set with $n$ elements.
%\item $\ordop{n}$ - finite ordered set with $n+1$ elements.

\section{The Waldhausen S-construction}
\label{Waldhausen}
Let $\CCC$ be an abelian category. We recall the construction of a simplicial system of spaces associated to $\CCC$ called the Waldhausen construction. This system of spaces plays a role in defining multiplication in the Hall algebra of $\CCC$ and providing the associator data. Our exposition in this section follows \cite{KapranovDyckerhoff}. 

Note that the classes of mono- and epimorphisms in $\CCC$ satisfy the following: 
 \begin{enumerate}
 \item Any commutative square with monomorphisms as vertical and epimorphisms as horizontal maps is a pullback iff it is a pushout. We will call such squares bicartesian.
  \item Pullbacks and pushouts of monomorphisms along epimorphisms exist.
 \end{enumerate}

The Waldhausen construction assigns to  a linear category $\CCC$ a functor $S_{-}\CCC:\OrdSet^{op}\to \Spaces$ as follows:

\begin{enumerate}
    \item Take $X\in\OrdSet$
    \item Consider $\grid(X):=\Hom(0\to 1,X)$, as a marked category, with the constant maps the marked objects. Note that $\grid(X)$ has two classes of maps, the "horizontal" and the "vertical", by which we mean maps that are identity on the 0 or 1 component, respectively. 
    \item Define $S_X\CCC$ to be the stack of maps from $\grid(X)$ to $\CCC$ which take the marked objects to $0$, the horizontal maps to monomorphisms and the vertical maps to epimorphisms, and take Cartesian squares to Cartesian squares.
\end{enumerate}

%\begin{Remark}
%The last requirement can be streamlined if we replace $\CCC$ with a stable $\infty$-category and simply require the maps to be exact maps of stable $\infty$-categories. This would however lead to the derived Hall algebra of \cite{ToenDerivedHallAlgebra}.
%\end{Remark}

\begin{Remark}
The last stage can be streamlined by noting that (assuming some restrictions on $X$) $\grid(X)$ has a natural structure of an \emph{extended} proto-abelian category, i.e. instead of one zero object it has a "zero subcategory" - the constant maps. Then $S_X\CCC$ can be considered to be just maps of proto abelian categories in this wider sense. See \autoref{ProtoAbelian}
\end{Remark}

\begin{Example}
Take $X=\ordop{1}=0\to 1$, then \[
\grid(X)=\stik{1}{
00 \ar{r} \& 01 \ar{d} \\
{} \& 11
}
\]
and so $S_X\CCC=S_{\ordop{1}}\CCC$ is the space of objects of $\CCC$. 
\end{Example}

\begin{Example}
Take $X=\ordop{2}=0\to 1\to 2$, then  \[
\grid(X)=\stik{1}{
00 \ar{r} \& 01 \ar{r} \ar{d} \& 02 \ar{d} \\
{} \& 11 \ar{r} \& 12 \ar{d} \\
{} \& {} \& 22
}
\]
The data of a map $\grid(X)\to\CCC$ then consists of a square\[
\stik{1}{
C_{01} \ar[hook]{r} \ar[two heads]{d} \& C_{02} \ar[two heads]{d}\\
C_{11}=0 \ar[hook]{r} \& C_{12}
}
\]
which must be Cartesian and therefore also coCartesian. This just means that $C_{01}\to C_{02} \to C_{12}$ is an exact sequence.
In all, $S_X\CCC=S_{\ordop{2}}\CCC$ is the stack of exact sequences in $\CCC$.
\end{Example}
It is now easy to guess the general shape of $S_X\CCC$. Namely, for $X=\ordop{n}$ we get the diagrams of the form 
\[
\stik{1}{
0 \ar[hook]{r} \& C_{01} \ar[two heads]{d} \ar[hook]{r} \& C_{02}\ar[two heads]{d} \ar[hook]{r} \& \cdots  \ar[hook]{r} \& C_{0n}\ar[two heads]{d} \\
{} \& 0  \ar[hook]{r} \& C_{12}\ar[two heads]{d} \ar[hook]{r} \& \cdots  \ar[hook]{r} \& C_{1n}\ar[two heads]{d} \\
{} \& {} \& 0 \ar[hook]{r} \& \cdots  \ar[hook]{r} \& C_{2n}\ar[two heads]{d}\\
{} \& {} \& {} \& \ddots \& \vdots\ar[two heads]{d} \\
{} \& {} \& {} \& {} \& 0
}
\]

where every square is biCartesian.

It is shown in \cite{KapranovDyckerhoff} Lemma 2.4.9 that the groupoid of diagrams of this shape is equivalent to the groupoid of flags of length $n$ providing the connection to the classical Waldhausen construction. Their argument can be generalized to stacks in a straightforward manner.

In the following section we will extend the Waldhausen construction to a functor from a certain subcategory of $\sset$ to $\Spaces$.

\section{The geometric Hall algebra}
\label{geoHall}
\begin{Notation}
From now on we will shorten $S_{\ordop{n}}\CCC$ to $S_{n}$.
\end{Notation}

\subsection{The associative multiplication}
The spaces $S_1$ and $S_2$ of the Waldhausen construction are used to define multiplication in the Hall algebra. Recall that $S_1$ is the stack of objects of $\CCC$ and $S_2$ is the stack of short exact sequences. We have the maps $mid:S_2 \to S_1$ and $end:S_2 \to S_1\times S_1$ given by 
\begin{align*}
    mid(0\to U\to V\to W\to 0)&=V\\
    end(0\to U\to V\to W\to 0)&=(U,W)
\end{align*}

This gives us a correspondence 
\[
\stik{1}{
{} \& S_2 \ar{dl}[above, xshift=-0.5em]{end} \ar{dr}{mid} \& {} \\
S_1\times S_1 \& {} \& S_1
}
\]

Our objective is to explicitly present the associativity data for this multiplication. The usual approach to showing the associativity of multiplication on the 1-categorical level comes down to considering the associativity square, i.e. the square 
\[
\stik{1}{
S_1^3 \& \& S_2\times S_1 \ar{ll} \ar{rr} \& \& S_1\times S_1 \\
S_1\times S_2 \ar{u} \ar{d} \& P_2 \ar{dr} \ar{l} \& {} \& P_1 \ar{ul} \ar{r}\&  S_2 \ar{u} \ar{d} \\
S_1\times S_1 \& \& S_2 \ar{ll} \ar{rr} \& \& S_1
}
\]
To show that this square commutes in the 1-category of correspondences one takes pullbacks $P_1$ and $P_2$ corresponding to compositions of the upper and right and left and lower sides in the category of correspondences and shows that they are isomorphic. In \cite{KapranovDyckerhoff} it is shown using relations between $S_1,S_2,S_3$ which are an instance of what the authors call the 2-Segal conditions. 

However if we want to consider the higher associativity data we have to adapt a more systematic approach. For instance in this square we will have an explicitly specified middle term

\[
\stik{1}{
S_1^3 \& S_2\times S_1 \ar{l} \ar{r} \& S_1\times S_1 \\
S_1\times S_2 \ar{u} \ar{d} \& X\ar{u} \ar{d} \ar{l} \ar{r} \&  S_2 \ar{u} \ar{d} \\
S_1\times S_1 \& S_2 \ar{l} \ar{r} \& S_1
}
\]
Where the appropriate squares are pullbacks. In this case $X=S_3$, and this condition recovers the 2-Segal condition.

We will explicitly construct a system of $n$-cubes serving as candidates for higher associativity data. That this data provides higher isomorphisms in the category of correspondences is equivalent to the 2-Segal conditions of \cite{KapranovDyckerhoff} (see \autoref{2Segalpullbacks})

%%%%%%%%%%%%%%%%%%%%%%%%%%%%%%%%%%%%%%%%%%%%%%%%%%%%

\subsection{\texorpdfstring{$n$}{n}-cubes of correspondences}
\label{Corr}

We describe what we mean by an $n$-cube in the category of correspondences. For $\DDD$ a category we take $\Corr(\DDD)$ to be the system of cubes (formally a \emph{cubical set}) with points being objects of $\DDD$ and arrows being correspondences (spans) in $\DDD$
\[
\stik{1}{
A \& E \ar{l} \ar{r} \& B
}
\]
A 2-cube is a diagram of the form
\begin{equation}
\label{CorrSquare}
\stik{1}{
A \& E \ar{l} \ar{r} \& B \\
F \ar{u} \ar{d}\& Z \ar[red,thick]{u} \ar[red,thick]{d}\ar[red,thick]{l} \ar[red,thick]{r} \& G\ar{u} \ar{d} \\
C \& H \ar{l} \ar{r} \& D \\
}
\end{equation}
We will say that a 2-cube \emph{commutes} if the upper right and lower left squares are Cartesian. This is equivalent to the usual notion in the 1-category of correspondences.

A 3-cube is a diagram 
\begin{equation*}
\label{CorrCube}
\stik{0.7}{
{} \& \& \phantom{XX}A_3 \& \& \phantom{XX}E_3 \ar[xshift=2em,shorten >=-2em]{ll} \ar{rr} \& \& B_3\\
\\
{} \& A_2 \ar{uur} \ar{ddl} \& \& E_2  \ar{uur} \ar{ddl}  \ar{ll} \ar{rr} \& \& B_2  \ar{uur} \ar{ddl} \\
{} \& {}\& \phantom{XX}F_3 \ar[xshift=1em]{uuu} \ar[xshift=1em]{dddd}\& {} \& \phantom{XX}Z_3 \ar{rr} \ar[xshift=2em,shorten >=-2em]{ll} \ar[xshift=1em]{uuu} \ar[xshift=1em]{dddd}\& {} \& G_3 \ar{uuu} \ar{dddd}\\
A_1 \& \& E_1 \ar{ll} \ar{rr} \& \& B_1 \& \&\\
\\
{} \& F_2  \ar{uuur} \ar{ddl} \ar{uuuu} \ar{dddd}\& \& Z_2  \ar[red,thick]{uuur} \ar[red,thick]{ddl} \ar[red,thick]{uuuu} \ar[red,thick]{dddd}\ar[red,thick]{ll} \ar[red,thick]{rr} \& \& G_2 \ar{uuur} \ar{ddl}  \ar{uuuu} \ar{dddd}\\
{} \& {}\&\phantom{XX}C_3 \& {}\&\phantom{XX}H_3 \ar{rr} \ar[xshift=2em,shorten >=-2em]{ll} \& {}\& D_3\\
F_1 \ar{uuuu} \ar{dddd}\& \& Z_1 \ar{uuuu} \ar{dddd}\ar{ll} \ar{rr} \& \& G_1\ar{uuuu} \ar{dddd} \& \& \\
\\
{} \& C_2  \ar{uuur} \ar{ddl} \& \& H_2 \ar{uuur}\ar{ddl} \ar{ll} \ar{rr} \& \& D_2  \ar{uuur} \ar{ddl} \\
\\
C_1 \& \& H_1 \ar{ll} \ar{rr} \& \& D_1 
}
\end{equation*}

We will say that a cube of correspondences commutes if the cubes in the upper-right-back and lower-left-front corners are \emph{pullback cubes} (see \autoref{PullbackCube}). This coincides with the usual notion of commutativity of a cube in a 2-category in the same way as above for squares.

\begin{Definition}
\label{commutativecubes}
Representing the vertices in an $n$-cube by sequences of 0's and 1's, we can now define the higher commutative cubes by the property that the two $n$-cubes generated by all faces containing the vertex $(1,0,1,0,\ldots)$ or all faces containing the vertex $(0,1,0,1,\ldots)$ are pullbacks. 
\end{Definition}
% \begin{Notation}
% Denote the system of commutative cubes in $\Corr(\DDD)$ by $\Corr_0(\DDD)$.
% \end{Notation}

\subsection{The combinatorial construction}
\label{Hcomb}
Our construction of the associativity data splits into two parts. We first construct the system of $n$-cubes in the category $\Corr(\sset^{op})$. Next we apply the extension of the Waldhausen construction described in \autoref{Waldhausen} $S^{ext}$ object-wise to obtain a system of $n$-cubes in $\Corr(\Spaces)$. 

We obtain a functor $H_{comb}: \AugOrdSet \rightarrow \Corr(SSet)$. By composing with the extended Waldhausen construction we then obtain a functor $H_{geo}: \AugOrdSet \rightarrow \Corr(Stacks)$. 

Formally the most straightforward way is to consider these functors as functors between categories modelled by cubical sets. However we do not wish to enter into technical details and limit ourselves to the very explicit construction which is the objective of this article. This construction is obviously functorial in any conceivable sense.

We start from the combinatorial construction. For every $n$-cube in $\AugOrdSet$ we construct a corresponding  $n$-cube in $\Corr(\sset^{op})$. 

\begin{Definition}
For $X\in\AugOrdSet$ define the \emph{augmentation} of $X$, $\aug{X}$ to be $\Hom_\OrdSet(X,0\rightarrow 1)$ considered as an object in $\OrdSet^{op}$.
\end{Definition}

This sends the totally ordered set with $n$ elements in $\AugOrdSet$ (which we denote $\ord{n}$) to the totally ordered set with $n+1$ elements in $\OrdSet^{op}$ (which we denote $\ordop{n}$).

\subsubsection{Construction for objects}
\label{HcombObjects}
Let $X\in\AugOrdSet$. Each element $x\in X$ determines an embedding of $\aug{\{x\}}$ in $\aug{X}$ as follows: Given a map $\{x\} \to \{0\rightarrow 1\}$, we extend it to a map $X \to \{0 \rightarrow 1\}$ by setting it to $0$ on lower than $x$ elements and $1$ on higher than $x$ elements of $X$. Consider the sub-simplicial set of $\aug{X}$ (considered as an object in $\sset^{op}$) generated by these embeddings. We will denote this simplicial set by $H_{comb}(X)$.

\begin{Example}
The first few values of $H_{comb}$ on the objects of $\AugOrdSet$ are as follows
\begin{itemize}
\item $H_{comb}(\ord{0})=\ordop{0}$
\item $H_{comb}(\ord{1})=\ordop{1}$ 
\item $H_{comb}(\ord{2})$ is the horn 
\includegraphics[scale=0.3]{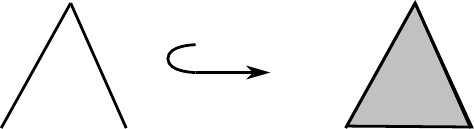}
\item $H_{comb}(\ord{3})$ is \includegraphics[scale=0.3]{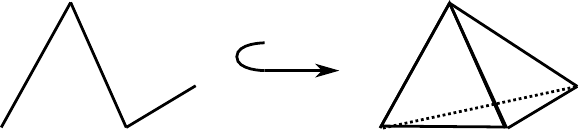}
\end{itemize}
\end{Example}
From the description in \autoref{Waldhausen} it is clear how $S$ extends to these objects. Namely, we have 
\begin{itemize}
\item $\ordop{0}\xmapsto{S^{ext}}S_0(C)=\point$
\item $\ordop{1}\xmapsto{S^{ext}}S_1(\CCC)$ 
\item $\includegraphics[scale=0.3]{Figures/2HornComb.pdf} \xmapsto{S^{ext}}S_1(\CCC)\times S_1(\CCC)\hookrightarrow S_2(\CCC)$
\item $\includegraphics[scale=0.3]{Figures/3HornComb.pdf}\xmapsto{S^{ext}}S_1(\CCC)\times S_1(\CCC)\times S_1(\CCC) \hookrightarrow S_3(\CCC)$
\end{itemize}

\subsubsection{Construction for arrows}

Let $X \xrightarrow{f} Y$ be a map in $\AugOrdSet$. We want to associate to it a correspondence in $\sset^{op}$.

Let $y\in Y$ and denote $X_y$ the preimage of $y$ under $f$. Similarly to the above, we get an imbedding $\aug{X_y}$ in $\aug{X}$. Denote the sub-simplicial set generated by these imbeddings for all $y$ by $H_{comb}(f)$. Then we have a natural correspondence \[
H_{comb}(X) \rightarrow{} H_{comb}(f) \leftarrow H_{comb}(Y)
\]

\begin{Example}
The multiplication is the image of the map $\ord{2} \to \ord{1}$, and on the level of $H_{comb}$ this map goes to 
\[
\includegraphics[scale=0.5]{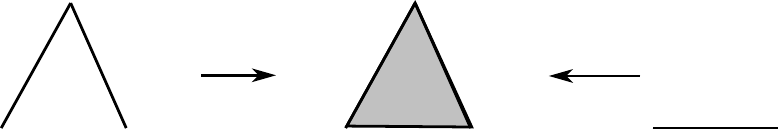}
\]
$S^{ext}$ then sends the middle object to the short exact sequences, and the maps to restriction to the endpoints or middle respectively. This is the correspondence defining the multiplication in the Hall algebra.
\end{Example}

%%%%%%%%%%%%%%%%%%%%%%%%%%%%%%%%%%%%%

\subsubsection{Construction for squares}

Given a square in $\OrdSet$
\[
\stik{1}{
X \ar{r}{f} \ar{d}{g} \& Y \ar{d}{h}\\
Z \ar{r}{k} \& W
}
\]
Similarly to what we did with arrows, we have a map $X\xrightarrow{\alpha}W$ and we then construct the square of correspondences

\[
\stik{1}{
H_{comb}(X) \ar{r} \ar{d} \& H_{comb}(f) \ar{d}{}\& H_{comb}(Y) \ar{l} \ar{d} \\
H_{comb}(g) \ar{r} \& H_{comb}(\alpha) \& H_{comb}(h) \ar{l}{} \\
H_{comb}(Z) \ar{u} \ar{r} \& H_{comb}(k) \ar{u} \& H_{comb}(W) \ar{l} \ar{u}
}
\]

where $H_{comb}(\alpha)$ is the sub simplicial set in $\aug{X}$ generated by the imbeddings of $\aug{(h\circ f)^{-1}(w)}$ for all $w\in W$.

The construction for general $n$-cubes following this example is straightforward and we omit the tedious general definition. See \autoref{2Segalpullbacks} for more examples.

\subsection{The higher associativity cubes}
The higher associators in the Hall algebra are given by the images of the following family of commutative cubes in $\AugOrdSet$:

\begin{Lemma}
For any $n\geq 3$ there is a unique commutative $n-1$ cube in $\AugOrdSet$ that contains all of the surjections $\ord{j}\to\ord{j-1}$ for all $1\leq j\leq n$.
\end{Lemma}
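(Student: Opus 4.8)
The plan is to reduce the statement to a combinatorial fact about surjections in $\AugOrdSet$ and then verify commutativity by an explicit description of all vertices of the cube. First I would set up the indexing: a surjection $\ord{j}\to\ord{j-1}$ for $1\le j\le n$ is determined by the choice of which consecutive pair of elements it collapses, so there are exactly $n-1$ of them when we range over $j$, but — more usefully — for a \emph{fixed} target the elementary surjections $\ord{n}\to\ord{n-1}$ are the $n-1$ codegeneracy-type maps $\sigma_0,\dots,\sigma_{n-2}$. These $n-1$ maps should be taken as the $n-1$ ``coordinate directions'' of the cube. Concretely, the vertices of the $(n-1)$-cube will be the objects obtained from $\ord{n}$ by collapsing an arbitrary subset $S\subseteq\{0,\dots,n-2\}$ of the ``gaps'': the vertex $v_S$ is the ordered set $\ord{n}/{\sim_S}$, and the edge in direction $i$ from $v_S$ (when $i\notin S$) to $v_{S\cup\{i\}}$ is the evident surjection. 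One must check this is well defined, i.e. that collapsing gaps is order-independent and that the resulting square faces genuinely commute in $\AugOrdSet$ (they do, since $\AugOrdSet$ has pushouts of surjections along surjections and a commuting square of surjections of ordered sets collapsing disjoint gap-sets is such a pushout). The initial vertex $(0,0,\dots,0)$ is $\ord{n}$ and the terminal vertex $(1,1,\dots,1)$ is $\ord{1}$, and by construction every elementary surjection $\ord{j}\to\ord{j-1}$ appears: taking $j$ with $1\le j\le n$, the map $\sigma_{j-2}\colon\ord{j}\to\ord{j-1}$ occurs along an appropriate edge after collapsing the complementary gaps. (For $j=n$ these are literally the $n-1$ edges out of the initial vertex; for smaller $j$ they sit deeper in the cube.)

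Next I would prove uniqueness. Any commutative $(n-1)$-cube in $\AugOrdSet$ is determined by its $2^{n-1}$ vertices together with the edge maps; but a commutative cube, viewed as a functor from the poset $\{0,1\}^{n-1}$, is determined by its restriction to the edges out of the initial vertex together with the commutativity (pushout/colimit) constraints — more precisely, once the initial vertex and the $n-1$ outgoing edges are fixed, each subsequent vertex is forced to be the colimit of the sub-diagram below it, and in $\AugOrdSet$ these colimits exist and are unique up to canonical iso. So it suffices to show the $n-1$ surjections out of the initial vertex are forced. The hypothesis says the cube must \emph{contain} all surjections $\ord{j}\to\ord{j-1}$, $1\le j\le n$; the only way to fit the three maps $\ord{n}\to\ord{n-1}$, $\ord{n-1}\to\ord{n-2}$ in a coherent cube with source $\ord{n}$ forces the initial vertex to be $\ord{n}$ (it is the only object of $\AugOrdSet$ admitting a surjection onto $\ord{n-1}$ that is not an iso and sitting at the bottom) and forces the $n-1$ edges out of it to be precisely $\sigma_0,\dots,\sigma_{n-2}$, since any two distinct edges out of a vertex in a commutative cube must be distinct maps, there are exactly $n-1$ of them, and there are exactly $n-1$ distinct elementary surjections with domain $\ord{n}$.

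Finally I would verify the commutativity in the sense of \autoref{commutativecubes}, i.e. that the two sub-$(n-1)$-cubes spanned by the faces through $(1,0,1,0,\dots)$ and through $(0,1,0,1,\dots)$ consist of pushout squares in $\AugOrdSet$ (equivalently pullbacks in $\AugOrdSet^{op}$). This is where the real content sits, and it is the step I expect to be the main obstacle: one must show that a square
\[
\stik{1}{
v_S \ar{r} \ar{d} \& v_{S\cup\{i\}} \ar{d}\\
v_{S\cup\{k\}} \ar{r} \& v_{S\cup\{i,k\}}
}
\]
of gap-collapsing surjections is a pushout. When the collapsed gaps $i$ and $k$ are non-adjacent this is immediate (collapsing happens in disjoint regions), and when they are adjacent it is a short direct check that $\ord{n}/{\sim_{S\cup\{i,k\}}}$ is the pushout of the other two over $v_S$. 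Assembling these face-wise pushouts into the required $(n-1)$-cube-level pullback condition is then a diagram chase using that a cube all of whose faces are pushouts has its ``total'' square a pushout. I would record the small cases $n=3,4$ explicitly (matching the horn pictures $H_{comb}(\ord{2}),H_{comb}(\ord{3})$ already drawn) as a sanity check before stating the general argument.
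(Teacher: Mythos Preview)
The paper does not actually supply a proof of this Lemma; it is stated and then illustrated by the explicit cubes for $n=3,4$. So there is no argument in the paper to compare yours against. Your construction of the cube via $v_S=\ord{n}/{\sim_S}$ is correct and is precisely the object the paper has in mind, and ordinary commutativity of every face is immediate from that description.

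There are, however, two genuine problems in what you wrote.

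First, the uniqueness argument has a gap. You claim that ``once the initial vertex and the $n-1$ outgoing edges are fixed, each subsequent vertex is forced to be the colimit of the sub-diagram below it.'' But the Lemma only asks for a \emph{commutative} cube, and a commutative square in $\AugOrdSet$ is not in general a pushout: given $\sigma_0,\sigma_1\colon\ord{3}\to\ord{2}$ one can complete to many commuting squares, not only the one with target $\ord{1}$. What actually pins down the remaining vertices is the hypothesis that \emph{every} elementary surjection $\ord{j}\to\ord{j-1}$ must occur as an edge somewhere in the cube; this is a placement constraint, and it (together with the fact that surjections are epi) is what forces the shape. You invoke the hypothesis to identify the initial vertex and its outgoing edges, but then abandon it in favour of the unjustified colimit claim for the rest of the cube. (The auxiliary assertion that ``any two distinct edges out of a vertex in a commutative cube must be distinct maps'' is also false as stated.)

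Second, your final paragraph rests on a misreading. The phrase ``commutative $(n-1)$-cube in $\AugOrdSet$'' in the Lemma means an ordinary commuting diagram indexed by the poset $(0\to 1)^{n-1}$. \autoref{commutativecubes}, by contrast, defines commutativity for cubes \emph{in a category of correspondences} $\Corr(\DDD)$, and that is a much stronger notion involving certain sub-cubes being pullbacks. The Lemma lives purely in $\AugOrdSet$ and requires no pushout or pullback verification whatsoever. The pullback content enters only later, in \autoref{Corr0}, \emph{after} one has applied $H_{comb}$ and the extended Waldhausen construction and landed in $\Corr(\Stacks)$; there it becomes the 2-Segal condition on $S$, which is a statement about the functor $S$ and not about the combinatorial cube in $\AugOrdSet$. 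So your last paragraph is attempting to prove something the Lemma does not ask for.
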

We call such cube "the $n$-associativity cube. The paths from $\ord{n}$ to $\ord{1}$ on the $n$-associativity cube correspond exactly to all ways of bracketing $n$ letters.

\begin{Example}
For $n=3$ the $n$-associativity cube is the square
\[
\stik{1}{
\ord{3} \ar[two heads]{r} \ar[two heads]{d} \& \ord{2} \ar[two heads]{d} \\
\ord{2} \ar[two heads]{r} \& \ord{1}
} \leftrightarrow \left(\alpha: (XY)Z\to X(YZ)\right) 
\]
\end{Example}

\begin{Example}
For $n=4$ the $n$-associativity cube is the cube 
\begin{gather*}  
\stik{1}{
 \& \ord{3} \ar[two heads]{rr} \ar[two heads]{dd} \& {} \& \ord{2} \ar[two heads]{dd}\\
\ord{4} \ar[two heads,crossing over]{rr} \ar[two heads]{dd} \ar[two heads]{ur} \& {} \& \ord{3} \ar[two heads]{dd} \ar[two heads]{ur} \& {}\\
{} \& \ord{2} \ar[two heads]{rr} \& {} \& \ord{1}\\
\ord{3} \ar[two heads]{rr} \ar[two heads]{ur} \& {} \& \ord{2} \ar[two heads]{ur}
\latearrow{commutative diagrams/crossing over,commutative diagrams/two heads}{2-3}{4-3}{}
}
\\
\updownarrow\\
\stik{1}{
{} \& (X(YZ))W \ar{r}{\alpha_{{}_{X,Y\cdot Z,W}}}\&  X((YZ)W) \ar{dr}{X\cdot \alpha}\\
((XY)Z)W \ar{ur}{\alpha\cdot W} \ar{dr}{\alpha_{{}_{X\cdot Y,Z,W}}} \& {} \&  {} \& X(Y(ZW))\\
{} \& (XY)(ZW) \phantom{X}\ar[shorten <=-1em,shorten >=-1em]{r}{\Id_{XY}\cdot\Id_{ZW}} \& \phantom{X}(XY)(ZW) \ar{ur}{\alpha_{{}_{X,Y,Z\cdot W}}}
}
\end{gather*}

The image of this cube under $H_{comb}$ and the extended Waldhausen construction is a cube of correspondences of stacks, where each face has a 2-morphism specified by $S$. These 2-morphisms compose along the edges and form a diagram whose commutativity is equivalent to the pentagon identity (This diagram is the pentagon diagram, with one of the edges equal $\Id$).

\end{Example}
The higher associativity cubes correspond to higher associators.
%%%%%%%%%%%%%%%%%%%%%%%%%%%%%%%%%%%%%%
%%%%%%%%%%%%%%%%%%%%%%%%%%%%%%%%%%%%%

% \begin{Example}
% \label{trivialsquare}
% The following square is part of the cube giving higher associativity (the pentagon axiom).

% \[
% \widestik{1}{8}{4}{
% \ord{4} \ar{r}{1\mapsto 1,2\mapsto 1,3\mapsto 2,4\mapsto 3} \ar{d}{
% \tiny{\begin{aligned}
% 1\mapsto 1\\
% 2\mapsto 2\\
% 3\mapsto 3\\
% 4\mapsto 3
% \end{aligned}}
% } \& \ord{3} \ar{d}{
% \tiny{\begin{aligned}
% 1\mapsto 1\\
% 2\mapsto 2\\
% 3\mapsto 2
% \end{aligned}}
% } \\
% \ord{3} \ar{r}{1\mapsto 1,2\mapsto 1,3\mapsto 2} \& \ord{2}
% }
% \]

% Its image is
% \[
% \includegraphics[scale=0.8]{}
% \]

% Note that the simplicial set in the center of the square giving the 2-morphism is no longer just a simplex as in the previous examples.

% \end{Example}

\subsection{Recovering the 2-Segal conditions}
This section contains the proof of the following:
\label{2Segalpullbacks}
%%%%%%%%%%%%%%%%%%%%%%%%%%%%%%%%%%%%%%
%%%%%%%%%%%%%%%%%%%%%%%%%%%%%%%%%%%%%

\begin{Theorem}
\label{Corr0}
The 2-Segal conditions satisfied by the original Waldhausen construction are equivalent to the requirement that the functor $H_{geo}:\AugOrdSet \rightarrow \Corr(\Stacks)$ sends the higher associativity cubes to commutative cubes in $\Corr(\Stacks)$.
\end{Theorem}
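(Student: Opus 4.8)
The plan is to reduce the statement to a face-by-face analysis of the associativity cube, using the explicit combinatorial description of $H_{comb}$ from \autoref{Hcomb} together with the defining condition for commutative cubes in \autoref{commutativecubes}. First I would unwind \autoref{commutativecubes}: a cube in $\Corr(\Stacks)$ is commutative iff the two sub-$n$-cubes hanging off the vertices $(1,0,1,0,\ldots)$ and $(0,1,0,1,\ldots)$ are pullback cubes, and a pullback cube is in turn characterized by all its $2$-faces being Cartesian together with a limit condition (see \autoref{PullbackCube}). So the theorem amounts to: for each $n\geq 3$, the distinguished $2$-faces of the image of the $n$-associativity cube under $H_{geo}$ are Cartesian squares of stacks exactly when the $2$-Segal maps of $S_\bullet\CCC$ are equivalences, together with the higher limit conditions which I would argue are automatic once the $2$-faces are Cartesian (this is where I'd invoke that pullback cubes are detected by their squares in a $1$-category of correspondences built on an $\infty$-category of stacks — the standard fact that limits over a cube factor through pushout squares of indexing posets).

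Next I would make the $2$-faces explicit. By the construction for squares in \autoref{Hcomb}, a face of the $n$-associativity cube is the image of a square of surjections in $\AugOrdSet$, and $H_{comb}$ sends it to a square of sub-simplicial sets of $\aug{\ord{n}}$ of the form $H_{comb}(X)\to H_{comb}(f)\leftarrow H_{comb}(Y)$ etc. The key computation is that the relevant corner square of such a face, after applying $S^{ext}$, becomes precisely a square of the shape
\[
\stik{1}{
S_a\times S_b \ar{r} \ar{d} \& S_{a+b} \ar{d} \\
\bullet \ar{r} \& \bullet
}
\]
or, in the generic interior faces, a square built from $S_2$ and $S_3$ that is literally the square
\[
\stik{1}{
S_1\times S_2 \ar{r} \ar{d}\& S_3 \ar{d}\\
S_2 \ar{r} \& S_2\times S_1 \text{ (pulled to }S_1\times S_1\times S_1\text{)}
}
\]
displayed in \autoref{geoHall}. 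I would verify that the $2$-Segal condition of \cite{KapranovDyckerhoff} (the statement that $S_n\CCC \xrightarrow{\sim} S_{\{0,\ldots,i\}}\times_{S_{\{i\}}} S_{\{i,\ldots,n\}}$, and its dual along the other diagonal) is exactly the assertion that these corner squares are Cartesian — this is essentially a restatement, since the embeddings defining $H_{comb}$ on the various subsets of $\ord{n}$ are precisely the face inclusions indexing the $2$-Segal covers. Conversely, every $2$-Segal map of $S_\bullet\CCC$ arises as such a corner square for a suitable $n$ and a suitable face, so the two conditions are logically equivalent, not merely one-directional.

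The main obstacle I anticipate is the bookkeeping in the forward direction at large $n$: showing that commutativity of the associativity cube forces \emph{all} $2$-Segal maps (not just the low-degree ones) requires checking that as $n$ grows the faces of the $n$-associativity cube exhaust all the simplicial subdivisions appearing in the $2$-Segal conditions, and that the higher (codimension $\geq 2$) pullback-cube conditions do not impose anything strictly stronger than Cartesianness of the $2$-faces. For the latter I would lean on the fact, recalled in \autoref{PullbackCube}, that in an $\infty$-categorical setting an $n$-cube is a limit cube as soon as it is "right Kan extended" from its $2$-dimensional corner, which follows from the squares being Cartesian by an iterated pasting argument; this is routine but needs the extended Waldhausen construction $S^{ext}$ to preserve the relevant limits, which it does by construction since $S^{ext}$ is defined as a mapping stack and mapping stacks send colimits of indexing (proto-abelian) categories to limits. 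With those two points in hand, the equivalence of the two conditions is a direct comparison, and I would close by remarking that the $n=3$ case is exactly the identification $X=S_3$ noted after the associativity square in \autoref{geoHall}, so the theorem is the natural generalization of that observation to all $n$.
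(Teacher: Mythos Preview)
Your overall strategy---unwind \autoref{commutativecubes}, identify the relevant corner sub-cubes, and match their Cartesian squares to 2-Segal conditions---is the same as the paper's. But two of the technical steps you lean on are not right as stated, and the paper handles them differently.

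First, the reduction from ``pullback cube'' to ``2-faces Cartesian''. You assert that a pullback $n$-cube is characterised by all its 2-faces being Cartesian plus a limit condition that is ``automatic once the 2-faces are Cartesian''. That is not how the paper (or \autoref{PullbackCube}) works, and in fact it is not true in general that an $n$-cube with all 2-faces Cartesian is a limit cube. What the paper actually uses is \autoref{pullbackcubeCor}: if one $(n{-}1)$-face of an $n$-cube is already a pullback cube, then the whole cube is a pullback iff the \emph{opposite} face is. This is the engine of the argument. In the $(1,0,1,\ldots)$ corner cube, one face is a product of trivial squares and hence automatically a pullback; the opposite face is then forced to be Cartesian and is identified with a condition $C^n_i$. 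In the $(0,1,0,\ldots)$ corner cube, some faces are lower-dimensional associator cubes (already handled by induction), and again the opposite faces yield the remaining $C^n_i$. Your ``iterated pasting / right Kan extension'' sketch does not supply this structure and would not, as written, let you isolate the specific 2-Segal squares.

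Second, the identification of the squares themselves. The displayed squares you write down (involving $S_a\times S_b\to S_{a+b}$, or $S_1\times S_2$ versus $S_3$) are not the squares that actually appear as faces of the corner sub-cubes. The paper's key reformulation of the 2-Segal property is condition $(4)$ of the proposition preceding the proof: the squares
\[
\stik{1}{
S_{\ordop{n}} \ar{r} \ar{d} \& S_{\ordop{n}\setminus\{i\}} \ar{d}\\
S_{\ordop{n}\setminus\{i+2\}} \ar{r} \& S_{\ordop{n}\setminus\{i,i+2\}}
}
\]
are pullbacks (call these $C^n_i$). It is precisely these that show up as the ``non-trivial'' faces of the two corner sub-cubes, with the even-indexed $C^n_{2k}$ coming from the $(0,1,0,\ldots)$ corner and the odd-indexed $C^n_{2k+1}$ from the $(1,0,1,\ldots)$ corner. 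Your bookkeeping worry about whether the faces ``exhaust all simplicial subdivisions'' is resolved by this: the $C^n_i$ for all $n,i$ are exactly what appear, and the proposition shows they generate the full 2-Segal condition. Without this concrete identification and the opposite-face lemma, your argument does not close.
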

Let us recall the 2-Segal condition from \cite[\S 2.3]{KapranovDyckerhoff}:

Let $S:\OrdSet\to\Stacks$ be a functor, and $P$ a polygonal decomposition of an $n$-gon, written as $(P_1,\ldots,P_k)$. e.g.
\[
%% Creator: Inkscape inkscape 0.91, www.inkscape.org
%% PDF/EPS/PS + LaTeX output extension by Johan Engelen, 2010
%% Accompanies image file 'PolygonalDecomp.pdf' (pdf, eps, ps)
%%
%% To include the image in your LaTeX document, write
%%   \input{<filename>.pdf_tex}
%%  instead of
%%   \includegraphics{<filename>.pdf}
%% To scale the image, write
%%   \def\svgwidth{<desired width>}
%%   \input{<filename>.pdf_tex}
%%  instead of
%%   \includegraphics[width=<desired width>]{<filename>.pdf}
%%
%% Images with a different path to the parent latex file can
%% be accessed with the `import' package (which may need to be
%% installed) using
%%   \usepackage{import}
%% in the preamble, and then including the image with
%%   \import{<path to file>}{<filename>.pdf_tex}
%% Alternatively, one can specify
%%   \graphicspath{{<path to file>/}}
%% 
%% For more information, please see info/svg-inkscape on CTAN:
%%   http://tug.ctan.org/tex-archive/info/svg-inkscape
%%
\begingroup%
  \makeatletter%
  \providecommand\color[2][]{%
    \errmessage{(Inkscape) Color is used for the text in Inkscape, but the package 'color.sty' is not loaded}%
    \renewcommand\color[2][]{}%
  }%
  \providecommand\transparent[1]{%
    \errmessage{(Inkscape) Transparency is used (non-zero) for the text in Inkscape, but the package 'transparent.sty' is not loaded}%
    \renewcommand\transparent[1]{}%
  }%
  \providecommand\rotatebox[2]{#2}%
  \ifx\svgwidth\undefined%
    \setlength{\unitlength}{156.92625631bp}%
    \ifx\svgscale\undefined%
      \relax%
    \else%
      \setlength{\unitlength}{\unitlength * \real{\svgscale}}%
    \fi%
  \else%
    \setlength{\unitlength}{\svgwidth}%
  \fi%
  \global\let\svgwidth\undefined%
  \global\let\svgscale\undefined%
  \makeatother%
  \begin{picture}(1,0.75123049)%
    \put(0,0){\includegraphics[width=\unitlength,page=1]{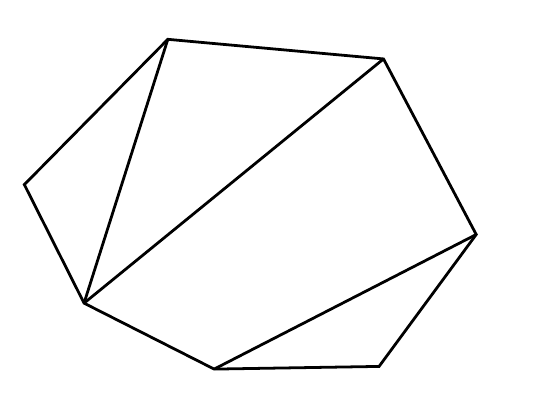}}%
    \put(0.12172059,0.40007091){\color[rgb]{0,0,0}\makebox(0,0)[lb]{\smash{}}}%
    \put(0.08573423,0.39352797){\color[rgb]{0,0,0}\makebox(0,0)[lb]{\smash{$P_1$}}}%
    \put(0.32618869,0.52438755){\color[rgb]{0,0,0}\makebox(0,0)[lb]{\smash{$P_2$}}}%
    \put(0.44069086,0.29701911){\color[rgb]{0,0,0}\makebox(0,0)[lb]{\smash{$P_3$}}}%
    \put(0.5748219,0.1203587){\color[rgb]{0,0,0}\makebox(0,0)[lb]{\smash{$P_4$}}}%
    \put(-0.0042317,0.41315677){\color[rgb]{0,0,0}\makebox(0,0)[lb]{\smash{$0$}}}%
    \put(0.26893765,0.71249808){\color[rgb]{0,0,0}\makebox(0,0)[lb]{\smash{$1$}}}%
    \put(0.7073172,0.67651165){\color[rgb]{0,0,0}\makebox(0,0)[lb]{\smash{$2$}}}%
    \put(0.90687802,0.32482657){\color[rgb]{0,0,0}\makebox(0,0)[lb]{\smash{$3$}}}%
    \put(0.67460231,0.02712161){\color[rgb]{0,0,0}\makebox(0,0)[lb]{\smash{$4$}}}%
    \put(0.3327317,0.0074925){\color[rgb]{0,0,0}\makebox(0,0)[lb]{\smash{$5$}}}%
    \put(0.06937679,0.14489469){\color[rgb]{0,0,0}\makebox(0,0)[lb]{\smash{$6$}}}%
  \end{picture}%
\endgroup%

\]

We define $S_P$ to be $S_{P_1}\times_{S_{P_1\cap P_2}} S_{P_2}\times_{S_{P_2\cap P_3}} S_{P_3}\ldots \times_{S_{P_{k-1}\cap P_k}} S_{P_k}$ where $S_{P_i}$ is $S_{\ordop{\#\{\text{vertices of }P_i\}}}$ and note that by functoriality we have a natural map
$S_{\ordop{n}}\to S_P$.

\begin{Definition}
A functor $S:\OrdSet\to\Stacks$ is said to be a 2-Segal stack if for and $n\geq 3$ and any polygonal decomposition $P$ of an $n$-gon the map $S_{\ordop{n}}\to S_P$ is a weak equivalence.
\end{Definition}

For our purposes we need the following dual version of Proposition 2.3.2 from \cite{KapranovDyckerhoff}:

\begin{Proposition}
The following are equivalent

\begin{enumerate}
\item $S$ is a 2-Segal stack.
\item For any $n\geq 3$ and any two disjoint subsets $I,J$ of $\ordop{n}$ that don't cover all the vertices of $\ordop{n}$ the following square
\[
\stik{1}{
S_{\ordop{n}} \ar{r} \ar{d} \& S_{\ordop{n}\setminus I} \ar{d}\\
S_{\ordop{n}\setminus J} \ar{r} \& S_{\ordop{n}\setminus (I\cup J)}
}
\]
is a pullback.
\item For any $n\geq 3$ and any $0\leq i < j\leq n$ of $\ordop{n}$ the following square
\[
\stik{1}{
S_{\ordop{n}} \ar{r} \ar{d} \& S_{\ordop{n}\setminus \{i\}} \ar{d}\\
S_{\ordop{n}\setminus \{j\}} \ar{r} \& S_{\ordop{n}\setminus \{i,j\}}
}
\]
is a pullback.
\item For any $n\geq 3$ and any $0\leq i\leq n-2$ of $\ordop{n}$ the following square
\[
\stik{1}{
S_{\ordop{n}} \ar{r} \ar{d} \& S_{\ordop{n}\setminus \{i\}} \ar{d}\\
S_{\ordop{n}\setminus \{i+2\}} \ar{r} \& S_{\ordop{n}\setminus \{i,i+2\}}
}
\]
is a pullback.
\end{enumerate}
\end{Proposition}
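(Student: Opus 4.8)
The plan is to establish the cycle $(1)\Rightarrow(2)\Rightarrow(3)\Rightarrow(4)\Rightarrow(1)$. The implications $(2)\Rightarrow(3)\Rightarrow(4)$ are immediate specializations: $(3)$ is $(2)$ with $I=\{i\}$ and $J=\{j\}$ (two vertices never exhaust $\ordop n$ when $n\geq 3$), and $(4)$ is $(3)$ with $j=i+2$. So the content is in $(1)\Rightarrow(2)$ and in $(4)\Rightarrow(1)$, the latter being the dual of \cite[Proposition~2.3.2]{KapranovDyckerhoff}. I would work throughout in the polygon picture: $S_{\ordop m}$ is the value of $S$ on the $(m+1)$-gon on vertices $\{0,\dots,m\}$, an inclusion of vertex sets induces the corresponding restriction map, and for a polygonal decomposition $P$ the stack $S_P$ is the iterated fibre product along the shared diagonals, together with its canonical map from $S_{\ordop m}$. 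The dictionary I would keep exploiting is: deleting a vertex set $K$ from $\ordop m$ is recorded by the decomposition of $\ordop m$ whose pieces are the polygon on $\ordop m\setminus K$ and the ``ear'' polygon of each maximal run of $K$; hence, if $S$ is $2$-Segal, $S_{\ordop m\setminus K}$ is assembled as an iterated fibre product out of $S$ of these ears.

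For $(1)\Rightarrow(2)$: write $Q$ for the polygon on $\ordop n\setminus(I\cup J)$. Applying $(1)$ to the smaller polygons $\ordop n\setminus I$ and $\ordop n\setminus J$ — which is part of the hypothesis that $S$ is a $2$-Segal stack — identifies $S_{\ordop n\setminus I}$ with $S_Q$ glued, along the relevant diagonals, to $S$ of the ears of the maximal runs of $J$, and symmetrically $S_{\ordop n\setminus J}$ with $S_Q$ glued to the ears of the runs of $I$. Substituting these into the fibre product over $S_{\ordop n\setminus(I\cup J)}=S_Q$ and reassociating identifies $S_{\ordop n\setminus I}\times_{S_{\ordop n\setminus(I\cup J)}}S_{\ordop n\setminus J}$ with $S_P$, where $P$ is the decomposition of $\ordop n$ into $Q$ and all the ears of the runs of $I$ and of $J$ (which are pairwise disjoint in the case at hand). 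Then $(1)$ applied to $P$ says $S_{\ordop n}\to S_P$ is a weak equivalence, i.e.\ the square is a (homotopy) pullback. The degenerate low-vertex cases, where $\ordop n\setminus(I\cup J)$ is an edge or a point, are checked directly.

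For $(4)\Rightarrow(1)$, which I expect to be the main obstacle: one first shows that the distance-two squares of $(4)$, taken over all polygons, generate every square of $(2)$ by repeated pasting of pullback squares — starting from the square for $(I,J)$ one peels off one vertex at a time, first from $I$ and then from $J$, and at each step the pasting law for pullbacks lets one factor the current square through one with strictly fewer deleted vertices, the process terminating on squares of the shape of $(4)$ in (possibly smaller) polygons that $(4)$ also governs. The base case is $n=3$, where the two instances of $(4)$ are exactly the two triangulation conditions for the square, i.e.\ the full $2$-Segal condition at $n=3$. Granting all of $(2)$, one then proves $(1)$ by induction on the number of pieces of $P$: if $P$ is the whole polygon there is nothing to prove, and otherwise the dual graph of $P$ is a tree, hence has a leaf $P_\ell$, a sub-polygon cut off along a single diagonal $e$. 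Writing $Q$ for the polygon obtained by removing $P_\ell$ and $P'$ for the induced decomposition of $Q$, the inductive hypothesis gives that $S_Q\to S_{P'}$ is a weak equivalence, so $S_P\simeq S_Q\times_{S_e}S_{P_\ell}$; and the comparison map $S_{\ordop n}\to S_Q\times_{S_e}S_{P_\ell}$ is precisely the square of $(2)$ attached to $I$ the set of interior vertices of $P_\ell$ and $J$ the set of vertices not on $P_\ell$, hence a pullback.

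The step needing the most care is this pasting argument for $(4)\Rightarrow(1)$: checking that an arbitrary vertex-deletion square genuinely reduces, by iterated pasting of pullbacks, to distance-two squares, and organizing the induction so that all the auxiliary polygons (degenerate ones included) are handled uniformly. This is the combinatorial heart of \cite[Proposition~2.3.2]{KapranovDyckerhoff}, and a reader content to cite that result may bypass $(4)\Rightarrow(1)$ entirely, leaving only the essentially formal reassociation of iterated fibre products in $(1)\Rightarrow(2)$, which is the other point where some bookkeeping is required.
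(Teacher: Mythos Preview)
Your proposal is correct and close in spirit to the paper's proof, but the organisation of the hard direction $(4)\Rightarrow(1)$ differs. You route it through $(2)$: first bootstrap the distance-two squares to arbitrary $(I,J)$ squares by iterated pasting of pullbacks, then deduce $(1)$ by a leaf-of-the-dual-tree induction on the number of pieces of $P$. The paper instead gives a single pasting rectangle
\[
\stik{1}{
S_{\ordop{n}} \ar{r} \ar{d} \& S_{\ordop{n}\setminus \{i\}} \ar{d} \ar{r}  \& S_{\{i+1,i+2,i+3\}} \ar{d}\\
S_{\ordop{n}\setminus \{i+2\}} \ar{r} \& S_{\ordop{n}\setminus \{i,i+2\}} \ar{r} \& S_{\{i+1,i+3\}}
}
\]
whose left square is $C^n_i$ and whose right square is a pullback by induction on $n$ (it is a $2$-Segal condition for the smaller polygon $\ordop{n}\setminus\{i\}$); the outer rectangle then says one may cut off the ear triangle $\{i+1,i+2,i+3\}$ from $\ordop{n}$, and since any triangulation is reached by successively removing ears and any polygonal decomposition refines to a triangulation, $(1)$ follows. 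The paper also records the intermediate implications $(2)\Rightarrow(1)$ and $(3)\Rightarrow(2)$ separately (the former by ``building up'' decompositions, the latter by removing points of $I,J$ one at a time), whereas you fold these into your $(4)\Rightarrow(2)\Rightarrow(1)$ chain. Your argument buys a cleaner bookkeeping of the general $(I,J)$ case and makes the leaf induction explicit; the paper's buys brevity and a direct geometric picture (cutting ears) without ever writing down the general $(I,J)$ square. Your $(1)\Rightarrow(2)$ via the ear-decomposition identification of the fibre product with $S_P$ is exactly what the paper means by ``$(2)$ is a special case of $(1)$ for the decomposition $\ordop{n}=I\cup J\cup(\ordop{n}\setminus(I\cup J))$'', just spelled out more carefully.
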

Denote by $C^n_i$ the conditions in part $(4)$.

\begin{proof}
Obviously $(2)\Rightarrow (3) \Rightarrow (4)$. $(1)\Rightarrow (2)$ because $(2)$ is equivalent to a special case of $(1)$ for the decomposition corresponding to \[\ordop{n}=I\cup J\cup (\ordop{n}\setminus (I\cup J))\]
It is also easy to see that any polygonal decomposition can be built up from such decompositions so $(2)\Rightarrow (1)$. $(3)\Rightarrow (2)$ similarly because we can remove the points of $I,J$ one by one.

$(4)\Rightarrow (1),(2),(3)$: Consider
\[
\stik{1}{
S_{\ordop{n}} \ar{r} \ar{d} \& S_{\ordop{n}\setminus \{i\}} \ar{d} \ar{r}  \& S_{\{i+1,i+2,i+3\}} \ar{d}\\
S_{\ordop{n}\setminus \{i+2\}} \ar{r} \& S_{\ordop{n}\setminus \{i,i+2\}} \ar{r} \& S_{\{i+1,i+3\}} 
}
\]
The right square is a pullback by induction on $n$, and the left square is a pullback by assumption, so the whole square is a pullback. In terms of polygonal decompositions this means we can separate away any triangle from the polygon and so we can get to any triangulation. Since any polygonal decomposition can be refined to a triangulation, we are done.

\end{proof}

\begin{proof}[Proof of \autoref{Corr0}]
Let us first consider the case $n=3$. The associativity square is \[
\stik{1}{
\ord{3} \ar{r} \ar{d} \& \ord{2} \ar{d} \\
\ord{2} \ar{r} \& \ord{1}
}
\]

Its image under $H_{comb}$ is
\[
\includegraphics[scale=0.8]{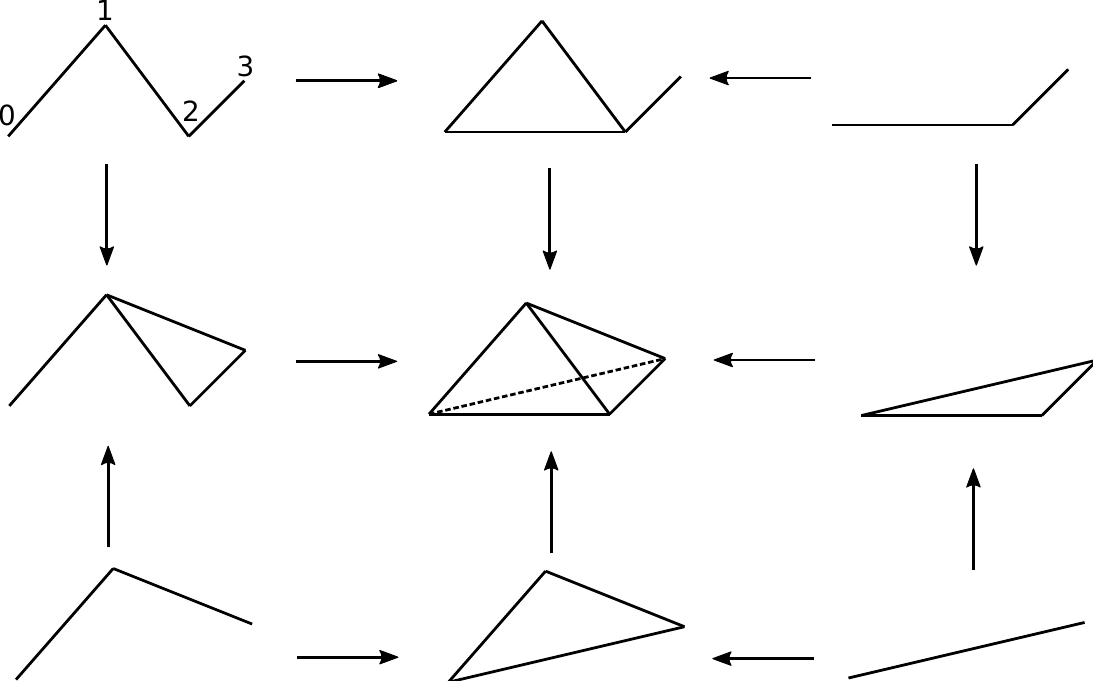}
\]

In order for this square to go to a commutative square of correspondences of stacks (see \autoref{commutativecubes}), $S^{ext}$ should take the squares in the upper right (i.e. $(1,0)$) and lower left (i.e. $(0,1)$) corners to pullback squares. This is easily seen to be equivalent to conditions $C^3_1$ and $C^3_0$.

Now consider $n=4$. The associativity cube is
\begin{equation}
\label{PentagonCube}
\stik{1}{
 \& \ord{3} \ar{rr} \ar{dd} \& {} \& \ord{2} \ar{dd}\\
\ord{4} \ar[crossing over]{rr} \ar{dd} \ar{ur} \& {} \& \ord{3} \ar{dd} \ar{ur} \& {}\\
{} \& \ord{2} \ar{rr} \& {} \& \ord{1}\\
\ord{3} \ar{rr} \ar{ur} \& {} \& \ord{2} \ar{ur}
\latearrow{commutative diagrams/crossing over}{2-3}{4-3}{}
}
\end{equation}

Its image under $H_{comb}$ is a cube of correspondences of $\sset^{op}$, that is, a $2\times 2\times 2$ grid of commutative cubes in $\sset^{op}$ so that the outer shell is comprised of the images of the faces of the cube \autoref{PentagonCube} and the center is the 4-simplex. 

In order for this cube to be commutative by \autoref{commutativecubes} the cubes in the upper-right-back (i.e. $(1,0,1)$) and lower-left-front (i.e. $(0,1,0)$) corners should go to pullback cubes under $S^{ext}$. Let's consider first the upper-right-back cube:
\[
\includegraphics[scale=1.2]{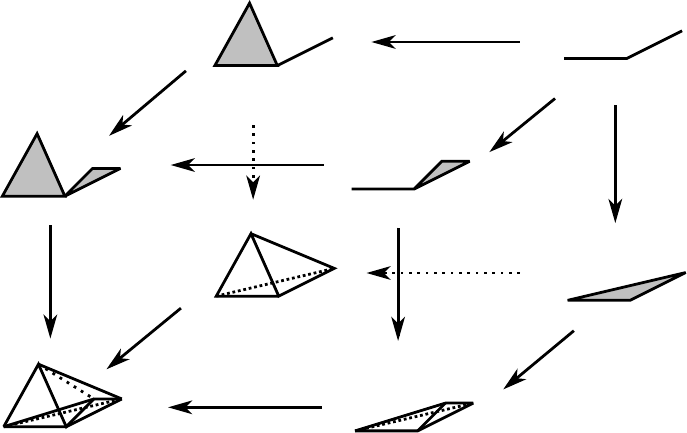}
\]

Its top face goes to a product of trivial squares, hence a pullback. Therefore by \autoref{pullbackcubeCor} the cube is a pullback iff the bottom face is a pullback, and this is exactly $C^4_1$.

Now consider the lower-left-front cube:
\[
\includegraphics[scale=1.2]{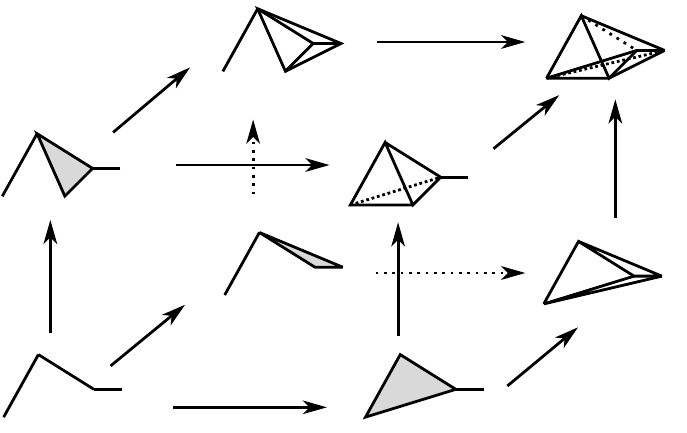}
\]

Its left and front faces correspond to conditions $C^3_1,C^3_2$ which follow from the previous case, and so for the cube to be a pullback we need either the right or the back faces to be pullbacks (in which case the other is as well). These give us conditions $C^4_2,C^4_0$.

We continue by induction, noting that sub-cubes of associator cubes are disjoint unions of lower dimensional associator cubes. The conditions $C^n_{2k}$ are recovered from the cube in the $(0,1,0,1,\ldots)$ corner being a pullback cube and the conditions $C^n_{2k+1}$ are recovered from the cube in the $(1,0,1,0,\ldots)$ corner being a pullback cube.
\end{proof}
%%%%%%%%%%%%%%%%%%%%%%%%%%%%%%%%%%%%%%%%%%%%%%%
%%%%%%%%%%%%%%%%%%%%%%%%%%%%%%%%%%%%%%%%%%%%%%%

\section{Transfer theories}
\label{Transfer}
In \autoref{geoHall} we describe a system of stacks $H_{geo}$ associated to a category $\CCC$. In order to recover from it any kind of algebra structure, we need to transfer the higher associative structure from correspondences of stacks to an algebraic setting $\AAA$ - formally a monoidal $\infty$ category with duals. We call such a gadget (following \cite{KapranovDyckerhoff}) a \emph{transfer theory}.

A transfer theory $T$ will need to assign to each $n$-cube of commutative correspondences an $n$-cube in our algebraic setting of choice.

To begin with, we need an assignment $X\mapsto T(X)$ on objects. Then we need for any correspondence $X\xleftarrow{f}Z\xrightarrow{g}Y$ a morphism $T(X)\to T(Y)$. Instead we will only require a to have an assignment $T(X)\xleftarrow{T(f)}Z\xrightarrow{T(g)}T(Y)$. Since by assumption $\AAA$ has duals we can choose a dual for $T(f)$ to get an actual morphism but we need not make a consistent choice of duals.

For squares, note that correspondences can be considered as $\Icorr$-diagrams for $\Icorr=(0\leftarrow M\rightarrow 1)$. Consider now the 2-category $\Icorr_2$: \begin{equation}
    \label{CorrSquare2cat}
    \stik{1}{
        (0,0) \& (M,0) \ar{r} \ar{l} \& (1,0) \\
        (0,M) \ar[Rightarrow,shorten <=1em,shorten >=1em]{dr}[above,sloped]{\sim}\ar[Rightarrow,shorten <=1em,shorten >=1em]{ur}[above,sloped]{\sim} \ar{u} \ar{d} \& (M,M)  \ar{r} \ar{l}  \ar{u} \ar{d} \& (1,M)\ar[Rightarrow,shorten <=1em,shorten >=1em]{dl}[above,sloped]{\sim} \ar[Leftarrow,shorten <=1em,shorten >=1em]{ul}[above,sloped]{\sim} \ar{u} \ar{d}\\
        (0,1) \& (M,1)  \ar{r} \ar{l} \& (1,1)
    }
\end{equation}

We want an assignment (compatible with the previous one) from any $\Icorr_2$ diagram in $\Stacks$ to an $\Icorr_2$ diagram in $\AAA$. Again, if we want to get an actual 2-morphism we need to replace some things with adjoints. Firstly, we need to take the left adjoint of the $(0,1)$ and $(1,0)$ squares, and the double left adjoint of the $(0,0)$ square to get a diagram of the form
\[
 \stik{1}{
        (0,0) \& (M,0) \ar[Rightarrow,red,shorten <=1em,shorten >=1em]{dl} \ar{r} \ar[leftarrow,red]{l} \& (1,0)  \ar[Rightarrow,red,shorten <=1em,shorten >=1em]{dl}\\
        (0,M) \ar[leftarrow,red]{u} \ar{d} \& (M,M) \ar[Leftarrow,red,shorten <=1em,shorten >=1em]{dl} \ar{r} \ar[leftarrow,red]{l}  \ar[leftarrow,red]{u} \ar{d} \& (1,M)\ar[Rightarrow,shorten <=1em,shorten >=1em]{dl}[above,sloped]{\sim} \ar[leftarrow,red]{u} \ar{d}\\
        (0,1) \& (M,1)  \ar{r} \ar[leftarrow,red]{l} \& (1,1)
    }
\]
In order to compose this we need to be able to invert the lower left morphism, or the other three. In both cases we want to end up with an invertible morphism so in fact we want to require all of them to be invertible. For the $(0,0)$ square this adds no requirement since a double adjoint of an invertible morphism is invertible, but for the $(0,1)$ and $(1,0)$ squares this needs to be a requirement on $T$. 

Note that, as shown in \autoref{Corr0} for the associator square the squares in question are pullback squares. Hence it is enough to require $T$ to be a functor from $\Stacks$ to $\AAA$ which takes pullback squares to squares satisfying the \emph{Beck-Chevalley} condition. (The Beck-Chevalley condition says precisely that the 2-morphism in the square whose sides are replaced by adjoints is invertible). 
\begin{Remark}
\label{pullbackBC}
In \cite{ourSSH} we showed that this is the same as saying that $T$ preserves a generalization of pullback squares.
\end{Remark}

This can be generalized to higher dimensional cubes using the notion of pullback cubes from \autoref{PullbackCube}. Using \autoref{Corr0} and \autoref{pullbackBC} we arrive at the following:

\begin{Definition}
\label{TransferDef}
A functor $T:\Stacks\to\AAA$ is called a \emph{transfer theory} if $T$ preserves generalized pullback cubes.
\end{Definition}

\subsection{Transfer to \texorpdfstring{$\Vect$}{Vect}}
\label{VectTransfer}
This construction is based on a functor from the 1-category of correspondences of groupoids to $\Vect$ described in \cite[\S8.2]{KapranovDyckerhoff}. We need to assume that $\CCC$ is finitary (categories of representations of a simply laced quiver satisfy this assumption).

A stack $X$ over $\field$ is sent to the vector space of finitely supported functions on the set $\pi_0(X(\field))$ (i.e. the isomorphism classes of $X(\field)$) which we denote $\Func(X)$.

Given a correspondence $X \xleftarrow{s} Y \xrightarrow{p} Z$ we first send it to the correspondence of groupoids $X(\field) \xleftarrow{s} Y(\field) \xrightarrow{p} Z(\field)$ and then to the map $\Func(X) \to \Func(Z)$ given by $p_!s^*$. This assumes some restrictions on $s$ and $p$ (see \cite[\S 2]{Dyckerhoff}) which are always satisfied in the cases we consider. 

It follows immediately from \cite[Proposition 2.17]{Dyckerhoff} that this assignment is a transfer theory to $\Vect$.

Applying this transfer to $H_{geo}$ recovers the usual Hall algebra.
%%%%%%%%%%%%%%%%%%%%%%%%%%%%%%%%%%%%%%%%%%%%%
%%%%%%%%%%%%%%%%%%%%%%%%%%%%%%%%%%%%%%%%%%%%%

\subsection{Transfer to \texorpdfstring{$\dgCat$}{dgCat}}
\label{CatTransfer}
Since all the stacks appearing in $H_{geo}$ are disjoint unions of global quotients, the assignment $X\to \ShConst(X)$ which sends a stack to the category of constructible sheaves on $X$ makes sense. i.e. if  $X\coprod X_i//G_i$ then $\ShConst(X):=\bigoplus \ShConst_{G_i}(X_i)$. The fact that the equivariant sheaves functor satisfies base change is exactly what we need in \autoref{TransferDef} when restricting to the image of $H_{geo}$.

It follows from \cite{VaragnoloVasserot} that applying this transfer to $H_{geo}$ recovers the categorification of quantum groups by KLR algebras.

\subsection{Transfer to \texorpdfstring{$\LinCat$}{LinCat}}
Define a transfer theory by sending a stack $X/\field$ to the category $\Rep_\CC(X(\field))$ of finitely supported representations of the groupoid $X(\field)$ in $\Vect_\CC$. In our article \cite{ourGLnBraiding} with Mark Penney we use this transfer in the case $\CCC=\Vect_{\FF_q}$. This yields a category equivalent to the category $\bigoplus \Rep(\GL(n,\FF_q))$ together with the monoidal structure of parabolic induction.

Using constructions from \cite{ourGeometricHall2} we use this point of view to recover the braiding constructed in \cite{JoyalStreetGLn}.
\section{Representations}
\label{Representations}
In this section we outline an approach to constructing certain representations from our geometric point of view.

For simplicity we consider the case $\CCC=\Vect_{\FF_q}$. 

Let $V\in\CCC$, and consider the category $\CCC_{/V}$. As discussed in \autoref{ProtoAbelian} this is an extended proto-abelian category, and therefore we can use it as the input in the Waldhausen construction, with some caveats.

For instance for a simplex $\ordop{n}$ we get the stack of diagrams of the form

\[
\stik{1}{
0 \ar[hook]{r} \& C_{01} \ar[two heads]{d} \ar[hook]{r} \& C_{02}\ar[two heads]{d} \ar[hook]{r} \& \cdots  \ar[hook]{r} \& C_{0n}\ar[two heads]{d} \\
{} \& 0  \ar[hook]{r} \& C_{12}\ar[two heads]{d} \ar[hook]{r} \& \cdots  \ar[hook]{r} \& C_{1n}\ar[two heads]{d} \\
{} \& {} \& 0 \ar[hook]{r} \& \cdots  \ar[hook]{r} \& C_{2n}\ar[two heads]{d}\\
{} \& {} \& {} \& \ddots \& \vdots\ar[two heads]{d} \\
{} \& {} \& {} \& {} \& 0
}
\]
where in all but the last column, the map to $V$ must be the zero map. As a result, this stack has a canonical projection to the Waldhausen construction for $\CCC$ and $\ordop{n-1}$ (the first $n-1$ columns), and noting that the rest can be generated by the object $C_{0n}$ by forming pushouts, we see that in fact this stack is equivalent to $S_{\ordop{n-1}}^\CCC\times S_{\ordop{1}}^{\CCC_{/V}}$.

From the above it is obvious that not every map of simplicial sets appearing in the combinatorial Hall algebra construction will be compatible with this Waldhausen construction. However if we restrict $H_{comb}$ to the subcategory $\OrdSetMod$ of $\OrdSet$ which has only the maps where the preimage of the top element is always non-empty, then there is no problem. 

For example the map $\ord{2}\to\ord{1}$ goes to the correspondence

\[
\stik{1}{
{} \& S_{\ordop{2}}^{\CCC_{/V}} \ar{dl}[above,xshift=-1.5em]{(C_{01},C_{12})} \ar{dr}[above,xshift=0.5em]{C_{02}}\& {} \\
S_{\ordop{1}}^{\CCC}\times S_{\ordop{1}}^{\CCC_{/V}} \& {} \& S_{\ordop{1}}^{\CCC_{/V}}
}
\]
which gives the action.
The subcategory $\OrdSetMod$ embodies the structure of a module over an algebra much in the same way as $\OrdSet$ embodies the structure of an algebra. We will return to this construction in more detail in a future article.
\begin{appendices}

\section{Pullback cubes}
\begin{Definition}
\label{PullbackCube}
A commutative cube is said to be a \emph{pullback} cube if it presents the vertex $X=000\ldots$ as the limit of the rest of the diagram. i.e. for any other commutative cube with $\widetilde{X}=000\ldots$ there exists a unique morphism $\widetilde{X}\to X$ making everything commute.
\end{Definition}

\begin{Lemma}
\label{pullbackcubeLemma}
Consider a commutative cube in some category
\[
\stik{1}{
 \& A \ar{rr} \ar{dd} \& {} \& B \ar{dd}\\
X \ar[crossing over]{rr} \ar{dd} \ar{ur} \& {} \& Y \ar{dd} \ar{ur} \& {}\\
{} \& C \ar{rr} \& {} \& D\\
Z \ar{rr} \ar{ur} \& {} \& W \ar{ur}
\latearrow{commutative diagrams/crossing over}{2-3}{4-3}{}
}
\]
And suppose that $ABCD$ is a pullback square, then $XYZW$ is a pullback square if and only if the whole cube is a pullback cube.
\end{Lemma}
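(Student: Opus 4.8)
The plan is to reduce the statement to comparing cones over two diagrams. First I would fix coordinates: label the eight vertices of the cube by $\{0,1\}^3$ so that $X=000$ is the source, the three vertices adjacent to $X$ are $A=100$, $Y=010$, $Z=001$, the three at distance two are $B=110$, $C=101$, $W=011$, and $D=111$ is the sink, with every edge of the cube increasing exactly one coordinate. With this convention $XYZW$ is the face $\{x_1=0\}$, $ABCD$ is the face $\{x_1=1\}$, the hypothesis ``$ABCD$ is a pullback square'' reads $A=B\times_D C$, and ``$XYZW$ is a pullback square'' reads $X=Y\times_W Z$. Unwinding \autoref{PullbackCube}, the cube is a pullback cube exactly when $X$, together with its canonical maps, is a limit of the punctured cube $Q$ obtained by deleting $X$: its objects are $A,B,C,D,Y,Z,W$ and its nine generating edges are $A\to B$, $A\to C$, $Y\to B$, $Y\to W$, $Z\to C$, $Z\to W$, $B\to D$, $C\to D$, $W\to D$.

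The key step is to show that restriction along the inclusion of the cospan $\mathcal{J}=(Y\to W\leftarrow Z)$ into $Q$ induces, for every test object $T$, a bijection between cones on $Q$ with apex $T$ and cones on $\mathcal{J}$ with apex $T$, natural in $T$. One direction is immediate: a cone on $Q$ restricts to a cone on $\mathcal{J}$. For the other, given $t_Y\colon T\to Y$, $t_Z\colon T\to Z$, $t_W\colon T\to W$ with $t_W=(Y\to W)\circ t_Y=(Z\to W)\circ t_Z$, set $t_B:=(Y\to B)\circ t_Y$, $t_C:=(Z\to C)\circ t_Z$, $t_D:=(W\to D)\circ t_W$; the commuting side faces $\{x_2=1\}$ and $\{x_3=1\}$ of the cube force $(B\to D)\circ t_B=t_D=(C\to D)\circ t_C$, so by the hypothesis $A=B\times_D C$ there is a unique $t_A\colon T\to A$ with $(A\to B)\circ t_A=t_B$ and $(A\to C)\circ t_A=t_C$. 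I would then verify that $(t_A,t_B,t_C,t_D,t_Y,t_Z,t_W)$ respects all nine generating edges of $Q$ — which it does by construction — and that these two assignments are mutually inverse; the only nontrivial check is that, starting from a cone on $Q$, reconstructing $t_A$ from $t_B,t_C$ returns the original $t_A$, which holds because $t_A$ was already the unique map into the pullback $A$ with those two composites.

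With the natural bijection in hand, the object $X$ (equipped with its canonical maps, which match up under the bijection) is a limiting cone over $Q$ if and only if it is a limiting cone over $\mathcal{J}$. The former says the cube is a pullback cube; the latter says $X=Y\times_W Z$, i.e. $XYZW$ is a pullback square. This gives both implications at once.

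I do not expect a genuine obstacle: the whole argument is the observation that, when $A$ is the pullback of $B\to D\leftarrow C$, the vertex $A$ adds no new data to a cone over $Q$, so such a cone is precisely a cone over the cospan $Y\to W\leftarrow Z$. The only place demanding care is bookkeeping — confirming that every edge of the punctured cube is accounted for when checking that the reconstructed family is a cone, and that the correspondence is bijective rather than merely surjective — and both points are handled by the uniqueness clause in the universal property of the pullback square $ABCD$. One could equivalently phrase the same proof as a direct double diagram chase, producing and showing unique the comparison map $\widetilde{X}\to X$ from an arbitrary commutative cube with initial vertex $\widetilde{X}$; the content is identical.
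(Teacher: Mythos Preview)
Your proof is correct and is essentially the same argument as the paper's: both rest on the observation that, because $A=B\times_D C$, the component at $A$ of any cone over the punctured cube is forced, so the limit over the punctured cube coincides with the pullback $Y\times_W Z$. The only difference is packaging—the paper chases each direction separately with a test object $\widetilde{X}$, while you compress both into the single natural bijection between cones over $Q$ and cones over $\mathcal{J}$, exactly as you remark in your final sentence.
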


\begin{proof}
For transparency let us present a proof for a 1-category case. The appropriate generalization is straightforward. 

Assume $XYZW$ is a pullback square.

Consider another commutative cube 
\[
\stik{1}{
 \& A \ar{rr} \ar{dd} \& {} \& B \ar{dd}\\
\widetilde{X} \ar[crossing over]{rr} \ar{dd} \ar{ur} \& {} \& Y \ar{dd} \ar{ur} \& {}\\
{} \& C \ar{rr} \& {} \& D\\
Z \ar{rr} \ar{ur} \& {} \& W \ar{ur}
\latearrow{commutative diagrams/crossing over}{2-3}{4-3}{}
}
\]
We want to show that there is a unique map $\widetilde{X}\to X$ that makes everything commute.

Since $XYZW$ is a pullback square we have a unique map $\widetilde{X}\to X$ such that $\widetilde{X}Y=XY\circ\widetilde{X}X$ and $\widetilde{X}Z=XZ\circ\widetilde{X}X$.

we just need to show that $\widetilde{X}A=XA\circ\widetilde{X}X$. This follows because both sides are a map $\widetilde{X} \to A$ which make the diagram
\[
\stik{1}{
\widetilde{X} \ar{dr} \ar{drr} \ar{ddr} \\
{} \& A \ar{d} \ar{r}\& B \ar{d} \\
{} \& C \ar{r} \& D
}
\]
commute. Since we assumed $ABCD$ is a pullback such a map is unique.

Assume now that the cube is a pullback and consider a square \[
\stik{1}{
\widetilde{X} \ar{r} \ar{d} \& Y \ar{d} \\
Z \ar{r} \& W
}
\]
We want to show that there is a unique map $\widetilde{X}\to X$ which makes the diagram 
\[
\stik{1}{
\widetilde{X} \ar{dr} \ar{drr} \ar{ddr} \\
{} \& X \ar{d} \ar{r}\& Y \ar{d} \\
{} \& Z \ar{r} \& W
}
\]
commute.

The compositions $YB\circ\widetilde{X}Y$ and $ZC\circ\widetilde{X}Z$ fit in a commutative square 
\[
\stik{1}{
\widetilde{X} \ar{r} \ar{d} \& B \ar{d} \\
C \ar{r} \& D
}
\]
and so there is a map $\widetilde{X}\to A$ that makes everything commute, and since the cube is a pullback this gives us our desired map $\widetilde{X}\to X$.
\end{proof}

\begin{Corollary}
\label{pullbackcubeCor}
Let $C$ be a commutative $n$-cube. Suppose that an $n-1$-subcube $C'$ in $C$ is a pullback cube, then $C$ is a pullback iff the opposite cube to $C'$ is a pullback cube.
\end{Corollary}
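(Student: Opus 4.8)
The plan is to reproduce the argument of \autoref{pullbackcubeLemma} essentially verbatim, with the two $2$-dimensional faces $ABCD$ and $XYZW$ there replaced by $(n-1)$-cubes; I will only spell out the two places where the extra dimensions enter. After reindexing the coordinates I may assume that $C'$ is the facet $\{x_n=1\}$, so that the opposite cube $C''$ is the facet $\{x_n=0\}$, and the latter contains the initial vertex $0=00\ldots0$. Both $C'$ and $C''$ are $(n-1)$-cubes, and $C$ is precisely the family of $n$-th-coordinate edges from $C''$ to $C'$. Write $X:=C(0)$, and let $e_n=00\ldots01$ be the initial vertex of $C'$, so that the $n$-th edge out of $X$ is a map $X\to C(e_n)$. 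For a vertex $v$ of $C'$ with $v\neq e_n$ put $v':=v-e_n$, the corresponding vertex of $C''$; then commutativity of $C$ gives that any map $X\to C(v)$ factors as $X\to C(v')\to C(v)$, and, since $e_n$ is minimal in $C'$, also as $X\to C(e_n)\to C(v)$. This factorization observation is the only real content, exactly as in \autoref{pullbackcubeLemma}.

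\emph{If $C''$ is a pullback cube, then so is $C$.} Given any commutative $n$-cube agreeing with $C$ away from the initial vertex, with initial vertex $\widetilde X$, its restriction to $C''$ is a commutative $(n-1)$-cube of the same type, so the universal property of $C''$ produces a unique $u\colon\widetilde X\to X$ compatible with all legs into $C''$. For $v\in C'$ with $v\neq e_n$, compatibility of $u$ with the leg $\widetilde X\to C(v)$ follows from $\widetilde X\xrightarrow{u}X\to C(v')\to C(v)$ together with compatibility at $v'$. Finally $\widetilde X\xrightarrow{u}X\to C(e_n)$ and the given leg $\widetilde X\to C(e_n)$ are two factorizations through $C(e_n)$ of the (now verified) cone over $C'\setminus\{e_n\}$, hence agree because $C'$ is a pullback cube. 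Uniqueness of $u$ as a map of cubes is inherited from its uniqueness over $C''$.

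\emph{If $C$ is a pullback cube, then so is $C''$.} Given a commutative $(n-1)$-cube agreeing with $C''$ away from $0$, with apex $\widetilde X$, extend it to $C\setminus\{0\}$ as follows: for $v\in C'$ with $v\neq e_n$ take the leg $\widetilde X\to C(v')\to C(v)$. Since $e_n$ is minimal in $C\setminus\{0\}$, the legs into $C'\setminus\{e_n\}$ just described form a cone over $C'\setminus\{e_n\}$ (by commutativity of $C$), and, as $C'$ is a pullback cube, factor uniquely through a leg $\widetilde X\to C(e_n)$; a routine check shows the resulting family is a commutative $n$-cube of the required type. The universal property of $C$ then produces a unique $\widetilde X\to X$ compatible with everything, in particular with $C''$, and conversely any $\widetilde X\to X$ compatible with $C''$ is automatically compatible with the extended family (again via $X\to C(v')\to C(v)$, plus uniqueness in $C'$ at $e_n$), so it coincides with the one just produced. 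Hence $C''$ is a pullback cube.

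The one mildly delicate point is the extension step in the second implication: defining the new legs into $C'$ and checking that the extended family really is a commutative cube of the right type. Conceptually this is a Fubini-type phenomenon — since $C'$ is a limit cone, the whole facet $C'$ may be replaced by its apex $C(e_n)$ without changing $\lim_{C\setminus\{0\}}$, which collapses the punctured $n$-cube to a cone over $C''$ — but the hands-on verification above is the cleanest route and requires no ideas beyond those in \autoref{pullbackcubeLemma}.
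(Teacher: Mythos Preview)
Your proof is correct and is exactly what the paper intends: its entire proof reads ``Proven in the same way as \autoref{pullbackcubeLemma} by induction on $n$,'' and you have simply written out that same argument with the two squares $ABCD$ and $XYZW$ replaced by the $(n-1)$-facets $C'$ and $C''$. The one small point to note is that your reindexing step tacitly assumes $C'$ is a facet \emph{not} containing the initial vertex (a coordinate permutation cannot turn $\{x_i=0\}$ into $\{x_n=1\}$); this matches the setup of \autoref{pullbackcubeLemma} and the way the corollary is actually used in the paper, so it is the intended reading.
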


\begin{proof}
Proven in the same way as \autoref{pullbackcubeLemma} by induction on $n$.
\end{proof}
\section{Proto-abelian categories}
\label{ProtoAbelian}
In \cite{Dyckerhoff} the notion of proto-abelian category is described. We briefly recall the definition here and define what we call an "extended" proto-abelian category.

\begin{Definition}
A \emph{proto-abelian category} is the data of a pointed (i.e. possessing a 0 object) category $\CCC$ together with two classes of morphisms $E,M$ ("epis" and "monos") which satisfy:
\begin{enumerate}
    \item $\CCC$ has all pushouts and pullbacks of epis along monos.
    \item pushouts or pullbacks of monos (epis) along epis (monos) are monos (epis).
    \item A square is a pushout of an epi along a mono iff it is a pulback of an epi along a mono.
\end{enumerate}
\end{Definition}

The only thing we need to change in the above definition is the pointedness of $\CCC$.

\begin{Definition}
A \emph{pseudo-zero} object of a category $\CCC$ is an object $Z$ such that we have $\#\Hom(Z,X)\leq 1$ for any $X\in\CCC$ or $\#\Hom(X,Z)\leq 1$ for any $X\in\CCC$.
\end{Definition}

\begin{Definition}
An \emph{extended} proto-abelian category is the same as a proto abelian category, except that instead of being pointed it is required to satisfy that any object has a map to and from some pseudo-zero object. We call the minimal collection of psudeo-zero objects with this property the "zero subcategory".
\end{Definition}

A morphism of extended proto-abelian categories is a functor $\CCC\to\DDD$ preserving epis and monos, their pullbacks, and zero subcategories.

\begin{Example}
The category $\grid(X)=\Hom(0\to 1,X)$ defined in \autoref{Waldhausen} is an extended proto-abelian category with zero subcategory the constant maps.
\end{Example}

\begin{Proposition}
Let $\CCC$ be a proto-abelian category, and $X\in\CCC$, then $\CCC_{/X}$ has a natural structure of an extended proto-abelian category.
\end{Proposition}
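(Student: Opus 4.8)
The plan is to transport the proto-abelian structure from $\CCC$ to $\CCC_{/X}$ along the forgetful functor $U\colon\CCC_{/X}\to\CCC$. I would declare a morphism of $\CCC_{/X}$ to lie in $M_{/X}$ (resp.\ $E_{/X}$) exactly when its underlying morphism in $\CCC$ lies in $M$ (resp.\ $E$); these classes on $\CCC_{/X}$ again contain the isomorphisms and are closed under composition, since $M$ and $E$ are. For the zero structure I would use the two objects $(0\to X)$, which is initial in $\CCC_{/X}$, and $(\Id_X\colon X\to X)$, which is terminal: initiality gives $\#\Hom_{\CCC_{/X}}\big((0\to X),-\big)\le 1$ and terminality gives $\#\Hom_{\CCC_{/X}}\big(-,(\Id_X)\big)\le 1$, so both are pseudo-zero objects, and every object $(f\colon A\to X)$ receives the canonical map from $(0\to X)$ and has the map $f$ to $(\Id_X)$. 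Hence the defining condition of an \emph{extended} proto-abelian category — that each object admit a map to and from some pseudo-zero object — holds, the zero subcategory being generated by these two objects.

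First I would isolate the one structural fact on which everything rests: the projection $U\colon\CCC_{/X}\to\CCC$ creates all colimits and all connected limits; in particular it creates pushouts and pullbacks. Concretely, a colimiting cocone over a diagram in $\CCC_{/X}$ is the same as a colimiting cocone of its image in $\CCC$ equipped with the arrow to $X$ induced by the universal property, and a square in $\CCC_{/X}$ is a pullback iff its $U$-image is a pullback in $\CCC$. In particular, for a span consisting of a morphism of $E_{/X}$ and a morphism of $M_{/X}$ sharing a source, the pushout (resp.\ pullback) exists in $\CCC_{/X}$ iff the corresponding pushout (resp.\ pullback) of the underlying span exists in $\CCC$, and is then computed in $\CCC$ with the evident map to $X$.

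Granting this, the three axioms become direct translations of the axioms for $\CCC$. For axiom~1: if $e\in E$, $m\in M$ share a source, axiom~1 for $\CCC$ produces the pushout and the pullback in $\CCC$, hence in $\CCC_{/X}$ by the creation statement. For axiom~2: the relevant pushout/pullback being computed in $\CCC$, the newly produced leg lies in $M$ (resp.\ $E$) by axiom~2 for $\CCC$, so it lies in $M_{/X}$ (resp.\ $E_{/X}$) since membership is detected by $U$. For axiom~3: a square in $\CCC_{/X}$ with verticals in $M_{/X}$ and horizontals in $E_{/X}$ is a pushout iff its $U$-image is a pushout in $\CCC$, iff that image is a pullback in $\CCC$ (axiom~3 for $\CCC$), iff the square is a pullback in $\CCC_{/X}$.

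I expect the only step requiring genuine care to be the creation statement for $U$ — in particular the assertion that the (co)limits inherited from $\CCC$ really do satisfy the slice-category universal properties rather than merely existing underneath. This is the standard fact that a slice projection preserves and creates connected (co)limits, and it is exactly why passing from $\CCC$ to $\CCC_{/X}$ costs nothing at the level of the proto-abelian axioms: slicing leaves all connected (co)limits untouched and disturbs only the terminal/initial objects, replacing the zero object of $\CCC$ by $(0\to X)$ and $(\Id_X)$, which is precisely the failure of pointedness that the ``extended'' notion is designed to absorb.
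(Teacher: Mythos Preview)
Your argument is correct and proves the proposition, but you equip $\CCC_{/X}$ with a different extended proto-abelian structure than the paper does. You take both $M_{/X}$ and $E_{/X}$ via the forgetful functor; the paper defines epis the same way but restricts monos to morphisms $(S\xrightarrow{0}X)\to(T\to X)$ whose \emph{source} carries the zero structure map. Both choices satisfy the three axioms and yield the same zero subcategory $\{(0\to X),\,(\Id_X)\}$. (The paper's verification of axiom~2 tacitly uses that members of $E$ are categorical epimorphisms: an epi $(A\xrightarrow{0}X)\to(C\to X)$ in $\CCC_{/X}$ forces $C\to X$ to be zero, so the pushed-out mono again has zero-mapped source.)

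The trade-off is that your transport along $U$ gives the cleanest verification---everything reduces to the standard fact that $U$ creates colimits and connected limits---while the paper's narrower mono class is tailored to the application in \autoref{Representations}. With their definition, a morphism of extended proto-abelian categories $\grid(\ordop{n})\to\CCC_{/X}$ forces every $C_{ij}$ with $j<n$ to have zero map to $X$, which is precisely the constraint ``in all but the last column the map to $V$ must be the zero map'' used to identify $S^{\CCC_{/X}}_{\ordop{n}}\simeq S^{\CCC}_{\ordop{n-1}}\times S^{\CCC_{/X}}_{\ordop{1}}$. Your broader structure would not impose that constraint, so while it certifies the proposition as stated, it is not the structure the paper goes on to use.
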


\begin{proof}
Define epis via the forgetful functor, and monos to be maps $(S\xrightarrow{0} X)\to(T\to X)$ where the underlying map $S\to T$ is a mono.

The conditions are then easy to verify, and the zero subcategory consists of $(0\to X)$ and $X\xrightarrow{\Id}X$.
\end{proof}
\end{appendices}

\printbibliography
\end{document}